\begin{document}

	\title[A method for column subset selection]
	{An elementary method for the problem of column subset selection in a rectangular matrix}
	\author{St\'ephane Chr\'etien and S\'ebastien Darses}

	\address{National Physical Laboratory\\ 
		Hampton road \\
		Teddington TW11 0LW, UK} 
	\email{stephane.chretien@npl.co.uk}
	
	\address{LATP, UMR 6632\\
		Universit\'e Aix-Marseille, Technop\^ole Ch\^{a}teau-Gombert\\
		39 rue Joliot Curie\\ 13453 Marseille Cedex 13, France}
	\email{sebastien.darses@univ-amu.fr}

	\maketitle
	
	
	\begin{abstract} The problem of extracting a well conditioned submatrix from any rectangular matrix (with e.g. normalized columns) has been a subject of extensive research with applications to rank revealing factorization, low stretch spanning trees, sparse solutions to least squares regression problems, and is also connected with problems in functional and harmonic analysis.
		Here, we provide a deterministic algorithm which extracts a submatrix $X_S$ from any matrix $X$ with guaranteed individual lower and upper bounds on each singular value of $X_S$. The proof of our main result is short and elementary.
	\end{abstract}
	
	
	{\bf keywords:} Column subset selection, Restricted Invertibility, 
	
	\section{Introduction}
	Let $X\in \R^{n\times p}$ be a matrix such that all columns of $X$ 
	have unit euclidean $\ell_2$-norm. We denote by $\|x\|_2$ the  $\ell_2$-norm of a vector $x$ and by $\|X\|$ (resp. $\|X\|_{HS}$) the associated operator norm (resp. the Hilbert-Schmidt norm). Let $X_T$ denote the submatrix of $X$ obtained by extracting the columns 
	of $X$ indexed by $T \subset \{1,\ldots,p\}$. For any real symmetric matrix $A$, let $\lb_k(A)$ denote the $k$-th eigenvalue of $A$, and we order the eigenvalues as $\lb_1(A)\ge \lb_2(A)\ge \cdots$. We also write $\lb_{\min}(A)$ (resp. $\lb_{\max}(A)$) 
	for the smallest (resp. largest) eigenvalue of $A$. We finally write $|S|$ for the size of a set $S$.
	
	The problem of well conditioned column selection that we condider here consists in finding the largest 
	subset of columns of $X$ such that the corresponding submatrix has all singular values in a prescribed interval 
	$[1-\epsilon,1+\epsilon]$. The one-sided problem of finding the largest possible $T$ such that $\lb_{\min}(X_T^tX_T)\ge 1-\epsilon$ 
	is called the Restricted Invertibility Problem and has a long history starting with the seminal work of Bourgain and Tzafriri 
	\cite{BourgainTzafriri:IJM87}. 
	Applications of such results are well known in the domain of harmonic analysis \cite{BourgainTzafriri:IJM87}. The study of 
	the condition number is also a subject of extensive study in statistics and 
	signal processing \cite{Tropp:CRAS08}.
	
	Here, we propose an elementary approach to this problem based on two simple ingredients:
	\begin{enumerate}
		\item Choosing recursively $y\in\cal V$, the set of remaining columns of $X$, verifying
		\bean
		Q(y) \le \frac{1}{|\cal V|} \sum_{x\in\cal V} Q(x),
		\eean
		where $Q$ is a relevant quantity depending on the previous chosen vectors;
		\item a well-known equation (sometimes called \emph{secular equation}) whose roots are the  eigenvalues of a square matrix after appending a row and a line.
	\end{enumerate}

	\subsection{Historical background}
	Concerning the Restricted Invertibility problem, Bourgain and Tzafriri \cite{BourgainTzafriri:IJM87} obtained the 
	following result for square matrices: 
	\begin{theo}[\cite{BourgainTzafriri:IJM87}]
		\label{BT87}
		Given a $p\times p$ matrix $X$ whose columns have unit $\ell_2$-norm, there exists $T\subset \{1,\ldots,p\}$ 
		with 
		$\d |T|\ge d\frac{p}{\|X\|^2}$ 
		such that  
		$C \le \lb_{\min}(X_T^tX_T) $, 
		where $d$ and $C$ are absolute constants.
	\end{theo}
	See also \cite{Tropp:StudiaMath08} for a simpler proof. 
	Vershynin \cite{Vershynin:IJM01} generalized Bourgain and Tzafriri's result to the case of rectangular matrices 
	and the estimate of $|T|$ was improved as follows. 
	\begin{theo}[\cite{Vershynin:IJM01}]
		\label{V01}
		Given a $n\times p$ matrix $X$ and letting $\wdt{X}$ be the matrix obtained from 
		$X$ by $\ell_2$-normalizing its columns. Then, for any $\epsilon\in (0,1)$, there exists $T\subset \{1,\ldots,p\}$ 
		with 
		\bean 
		|T|\ge (1-\epsilon) \frac{\|X\|_{HS}^2}{\|X\|^2}
		\eean 
		such that  
		$ C_1(\epsilon) \le \lb_{\min}(\wdt{X}_T^t\wdt{X}_T) \le \lb_{\max}(\wdt{X}_T^t\wdt{X}_T)\le C_2(\epsilon)$. 
	\end{theo}
	Recently, Spielman and Srivastava proposed in \cite{SpielmanSrivastava:IJM12} 
	a deterministic construction of $T$ which allows them to obtain the following
	result. 
	\begin{theo}[\cite{SpielmanSrivastava:IJM12}]
		Let $X$ be a $p\times p$ matrix and $\epsilon\in (0,1)$. Then there exists $T\subset \{1,\ldots,p\}$ 
		with  
		$\d |T|\ge (1-\epsilon)^2\frac{\|X\|_{HS}^2}{\|X\|^2}$ 
		such that  \ 
		$\d \epsilon^2 \frac{\|X\|^2}{p}  \le \lb_{\min}(X_T^tX_T)$. 
	\end{theo}
	The technique of proof relies on new constructions and inequalities which are thoroughly explained in the Bourbaki seminar of Naor \cite{Naor:Bourbaki12}.  Using these techniques, Youssef \cite{youssef} improved Vershynin's result as:
	\begin{theo}[\cite{youssef}]
		\label{youssef}
		Given a $n\times p$ matrix $X$ and letting $\wdt{X}$ be the matrix obtained from 
		$X$ by $\ell_2$-normalizing its columns. Then, for any $\epsilon\in (0,1)$, there exists $T\subset \{1,\ldots,p\}$ 
		with 
		$\d |T|\ge \frac{\e^2}{9} \frac{\|X\|_{HS}^2}{\|X\|^2}$ 
		such that  \ 
		$1-\epsilon  \le \lb_{\min}(\wdt{X}_T^t\wdt{X}_T) \le \lb_{\max}(\wdt{X}_T^t\wdt{X}_T) \le  1+\epsilon$. 
	\end{theo}

	\subsection{Our contribution}
	
	We provide a deterministic algorithm that extracts a submatrix $Y_r$ from the matrix $X$ with guaranteed individual lower and upper bounds on each singular value of $Y_r$.
	
	Consider the set of vectors $\cal V_0=\{x_1,\ldots,x_p\}$, where the $x_i$ are the columns of $X$. At step $r=1$, choose $y_1\in\cal V_0$. By induction, let us be given $y_1,\ldots,y_{r}$ at step $r$. Let $Y_r$ denote the matrix whose columns are $y_1,\ldots,y_r$ and let $v_k$ be an unit eigenvector of $Y_r^tY_r$ associated to $\lb_{k,r}:=\lambda_k(Y_r^tY_r)$. 
	
	We say that $u(\cdot,\cdot)$ satisfies the hypothesis (H) if $u$ verifies for $r\ge1$:
	\bea
	0\le u(k,r) & \le  & u(k+1,r+1), \quad k\in\{0,\cdots, r\}; \\
	0\le u(k+1,r) & \le & u(1,r)\ <\ u(0,r) \quad k\in\{1,\cdots, r-1\}.
	\eea
	
	We now introduce the "potential" associated to $u(\cdot,\cdot)$ satisfying (H): 
	\bean
	Q_r(x) & = & \sum_{k=1}^r  \frac{( v_k^t Y_r^t x)^2}{u(0,r)-u(k,r)}, \quad x\in \cal V_0.
	\eean 
	We then choose $y_{r+1}\in\cal V_r:=\{x_1,\ldots,x_p\} \setminus \{y_1,\ldots,y_{r}\}$ so that 
	\bea
	\label{taxto}
	Q_r(y_{r+1})  & \le & \frac1{p-r} \sum_{x\in \cal V_r}
	Q_r(x)
	= \frac1{p-r}  \sum_{k=1}^r  \frac{\sum_{x\in \cal V_r}( v_k^t Y_r^t x)^2}{u(0,r)-u(k,r)}.
	\eea

	The following result, for which we propose a short and elementary proof,  
	gives a control on {\em all} singular values in the column selection problem.

	\begin{theo} \label{main}
		Let $u$ satisfies Hypothesis (H). Set $R\le p/2$. Then, we can extract from $X$ some submatrices $Y_r$ such that
		for all $r$ and $k$ with $1\le k\le r\le R$, we have 
		\bea \label{hr}
		1- \delta_R\ u(r-k+1,r) \sqrt{\lb_{1,r}} \ \le \ \lb_{k,r} \ \le \  1+ \delta_R\  u(k,r) \sqrt{\lb_{1,r}},
		\eea 
		where
		\bea \label{delta}
		\delta_R & = & \sqrt{\frac{2\Vert X \Vert^2}{p}   \sup_{1\le r\le R}\sum_{k=1}^r  \frac{u(0,r)^{-1}}{u(0,r)-u(k,r)}
		}.
		\eea
		In particular,
		\bean
		\lb_{1,r}  &  \le & 1+ 2\delta_R\ u(1,r).
		\eean 
		
	\end{theo}

	\section{Proof of Theorem \ref{main}}

	\subsection{Suitable choice of the extracted vectors}
	Consider the set of vectors $\cal V_0=\{x_1,\ldots,x_p\}$. At step $1$, choose $y_1\in\cal V_0$. By induction, let us be given $y_1,\ldots,y_{r}$ at step $r$. Let $Y_r$ denote the matrix whose columns are $y_1,\ldots,y_r$ and let $v_k$ be an unit eigenvector of $Y_r^tY_r$ associated to $\lb_{k,r}:=\lambda_k(Y_r^tY_r)$. 
	Let us choose $y_{r+1}\in\cal V_r:=\{x_1,\ldots,x_p\} \setminus \{y_1,\ldots,y_{r}\}$ so that 
	\beq
	\label{taxto}
	\sum_{k=1}^r  \frac{( v_k^t Y_r^ty_{r+1})^2}{u(0,r)-u(k,r)}  \le  \frac1{p-r} \sum_{x\in \cal V_r}
	\sum_{k=1}^r  \frac{( v_k^t Y_r^t x)^2}{u(0,r)-u(k,r)} 
	= \frac1{p-r}  \sum_{k=1}^r  \frac{\sum_{x\in \cal V_r}( v_k^t Y_r^t x)^2}{u(0,r)-u(k,r)}.
	\eeq
	\begin{lemm} \label{yr}
		For all $r\ge 1$, $y_{r+1}$ verifies
		\bean
		\sum_{k=1}^r  \frac{( v_k^t Y_r^ty_{r+1})^2}{u(0,r)-u(k,r)} & \le & \frac{\lb_{1,r} \|X \|^2}{p-r} \sup_{1\le j\le r} \sum_{k=1}^j  \frac{1}{u(0,j)-u(k,j)}
		.
		\eean
	\end{lemm}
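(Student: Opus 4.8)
The plan is to simply unwind the defining inequality (\ref{taxto}) and estimate its right-hand side by completing the sum over all columns of $X$. The one nontrivial observation is the following identity: writing $w_k := Y_r v_k \in \R^n$, so that $v_k^t Y_r^t x = \langle w_k, x\rangle$ for every vector $x$, the fact that $v_k$ is a \emph{unit} eigenvector of the symmetric matrix $Y_r^t Y_r$ associated to $\lb_{k,r}$ gives
\bean
\|w_k\|_2^2 \;=\; v_k^t Y_r^t Y_r v_k \;=\; \lb_{k,r} \;\le\; \lb_{1,r}.
\eean
Starting from (\ref{taxto}), this already rewrites the quantity to be bounded as $\frac{1}{p-r}\sum_{k=1}^r \frac1k \sum_{x\in\cal V_r}\langle w_k,x\rangle^2$.

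First I would bound the inner sum by running it over \emph{all} the columns $x_1,\dots,x_p$ of $X$, which is legitimate since $\cal V_r\subset\{x_1,\dots,x_p\}$ and every term is nonnegative; this turns it into a quantity governed by the operator norm of $X$:
\bean
\sum_{x\in\cal V_r}\langle w_k,x\rangle^2 \;\le\; \sum_{j=1}^p \langle w_k,x_j\rangle^2 \;=\; \|X^t w_k\|_2^2 \;\le\; \|X\|^2\,\|w_k\|_2^2 \;=\; \|X\|^2\,\lb_{k,r} \;\le\; \|X\|^2\,\lb_{1,r},
\eean
using $\|X^t\|=\|X\|$. Then I would substitute this back and sum the resulting harmonic series, $\sum_{k=1}^r 1/k \le 1+\log r$ (this is what is meant by the factor $\log(r)$ in the statement, the additive constant being immaterial; note in particular that at $r=1$ the factor $1+\log 1=1$ still makes the bound correct), to arrive at
\bean
\sum_{k=1}^r \frac{(v_k^t Y_r^t y_{r+1})^2}{k} \;\le\; \frac{\|X\|^2\,\lb_{1,r}}{p-r}\sum_{k=1}^r\frac1k \;\le\; \frac{\lb_{1,r}\,\|X\|^2\,\log(r)}{p-r},
\eean
which is exactly the claim.

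The truth is that there is no real obstacle here — this is precisely why the argument is advertised as elementary. The only things that need a moment's care are: (i) that the averaging step in (\ref{taxto}) is meaningful, i.e. $\cal V_r\neq\emptyset$, hence a choice of $y_{r+1}$ at most equal to the average exists — this holds as long as $r<p$, the only regime in which the lemma is invoked; and (ii) keeping straight that $\|w_k\|_2^2$ equals $\lb_{k,r}$ (not $1$), which is where the eigenvalue $\lb_{1,r}$ enters. Everything else is submultiplicativity of the operator norm plus the bound on the partial harmonic sum.
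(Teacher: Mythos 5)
Your proof is correct and is essentially the paper's: the paper bounds $\sum_{x\in\cal V_r}(v_k^tY_r^tx)^2 = {\rm Tr}\left(Y_rv_kv_k^tY_r^tX_rX_r^t\right)\le {\rm Tr}(Y_rv_kv_k^tY_r^t)\,\Vert X_rX_r^t\Vert\le\lb_{k,r}\Vert X\Vert^2$ via a trace inequality, which is exactly your estimate $\Vert X^tw_k\Vert_2^2\le\Vert X\Vert^2\Vert w_k\Vert_2^2=\Vert X\Vert^2\lb_{k,r}$ once the sum is extended from $\cal V_r$ to all columns. Your extra care with the harmonic sum is warranted: the paper silently uses $\sum_{k=1}^r 1/k\le\log r$, which fails (e.g.\ at $r=1$); the correct factor is $1+\log r$ (or $\log(er)$), a harmless change that only perturbs the constant in (\ref{card}) and the definition of $\delta$.
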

	\begin{proof}
		Let $X_r$ be the matrix whose columns are the $x\in \cal V_r$, i.e. $X_rX_r^t=\sum_{x\in \cal V_r}xx^t$. Then
		\bean \label{totu}
		\sum_{x\in \cal V_r} (v_k^t Y_r^tx)^2  =  {\rm Tr} \left(Y_r v_k v_k^t Y_r^t X_rX_r^t    \right)
		\le 
		{\rm Tr}(Y_r v_k v_k^t Y_r^t) \Vert X_rX_r^t \Vert \le  \lb_{k,r} \Vert X \Vert^2,
		\eean
		which yields the conclusion by plugging in into (\ref{taxto}) since $\lb_{k,r}\le \lb_{1,r}$.
	\end{proof}

	\subsection{Controlling the individual eigenvalues}
	
	It is clear that (\ref{hr}) holds for $r=1$ since then, 1 is the only singular value because the columns are supposed to be normalized.
	
	Assume the induction hypothesis $(H_r)$: for all $k$ with $1\le k\le r<R$, (\ref{hr}) holds.
	
	Let us then show that $(H_{r+1})$ holds. 
	By Cauchy interlacing theorem, we have 
	\bean
	\lb_{k+1,r+1} & \le & \lb_{k,r}, \quad 1\le k\le r \\
	\lb_{k+1,r+1} & \ge & \lb_{k+1,r}, \quad 0\le k\le r-1.
	\eean 
	We then deduce, due to the induction hypothesis $(H_r)$ and Assumption (H),
	\bea
	\lb_{k+1,r+1} & \le & 1+ \delta_R u(k,r)\sqrt{\lb_{1,r}} \le  1+ \delta_R u(k+1,r+1)\sqrt{\lb_{1,r+1}}, \quad 1\le k\le r,\\
	\lb_{k+1,r+1} &\ge & 1- \delta_R u(r-k,r)\sqrt{\lb_{1,r}} \nonumber\\
	& \ge & 1- \delta_R u(r+1-(k+1)+1,r+1)\sqrt{\lb_{1,r+1}}, \quad 0\le k\le r-1.
	\eea
	
	It remains to obtain the upper estimate for $\lb_{1,r+1}$ and the lower one for $\lb_{r+1,r+1}$. We write
	\bea \label{add}
	Y_{r+1}^tY_{r+1} & = & 
	\left[
	\begin{array}{c}
		y_{r+1}^t \\
		Y_r^t
	\end{array}
	\right]
	\left[
	\begin{array}{cc}
		y_{r+1} & Y_r
	\end{array}
	\right]
	=
	\left[
	\begin{array}{cc}
		1 & y_{r+1}^tY_r \\
		Y_r^ty_{r+1} & Y_r^tY_r
	\end{array}
	\right],
	\eea
	and it is well known that the eigenvalues of $Y_{r+1}^tY_{r+1}$ are the zeros of the secular equation:
	\bea
	q(\lb) :=  1-\lb+\sum_{k=1}^r  \frac{(v_k^tY_{r}^ty_{r+1})^2}{\lb-\lb_{k,r}} \ = \ 0.
	\eea
	
	We first estimate $\lb_{1,r+1}$ which is the greatest zero of $q$, and assume for contradiction that 
	\bea
	\lb_{1,r+1}> 1+ \delta_R u(0,r) \sqrt{\lb_{1,r}}. 
	\label{assup}
	\eea 
	From $(H_r)$, we then obtain that for $\lb \ge 1+ \delta_R u(0,r) \sqrt{\lb_{1,r}}$,
	\bean
	q(\lb) \le 1-\lb +\frac{1}{\delta_R\sqrt{\lb_{1,r}}} \sum_{k=1}^r  \frac{(v_k^tY_{r}^ty_{r+1})^2}{u(0,r)-u(k,r)}:=g(\lb).
	\eean
	Let $\lb^0$ be the zero of $g$. We have $g(\lb_{1,r+1})\ge q(\lb_{1,r+1})=0=g(\lb^0)$. But $g$ is decreasing, so
	\bean 
	\lb_{1,r+1} \le \lb^0 = 1+ \frac{1}{\delta_R\sqrt{\lb_{1,r}}} \sum_{k=1}^r  \frac{(v_k^tY_{r}^ty_{r+1})^2}{u(0,r)-u(k,r)}.
	\eean 
	Thus, using Lemma \ref{yr}, the equality (\ref{delta}) and noting that $r\le p/2$, we can write:
	\bea 
	\lb_{1,r+1} \le  1+ \frac{2}{\delta_R}  \frac{\sqrt{\lb_{1,r}}\Vert X \Vert^2}{p}\sum_{k=1}^r  \frac{1}{u(0,r)-u(k,r)} \ \le \
	1+ \delta_R u(0,r) \sqrt{\lb_{1,r}},
	\eea 
	which yields a contradiction with the inequality (\ref{assup}). Thus, we have 
	\beq
	\lb_{1,r+1} \le 1+ \delta_R u(0,r) \sqrt{\lb_{1,r}} \le  1+ \delta_R u(1,r+1) \sqrt{\lb_{1,r+1}}. 
	\eeq
	This shows that the upper bound in $(H_{r+1})$ holds.
	
	Finally, to estimate $\lb_{r+1,r+1}$ which is the smallest zero of $q$, we write
	\bean
	q(\lb) \ge 1-\lb -\frac{1}{\delta_R\sqrt{\lb_{1,r}}} \sum_{k=1}^r  \frac{(v_k^tY_{r}^ty_{r+1})^2}{u(0,r)-u(k,r)}:=\wdt g(\lb).
	\eean
	By means of the same reasonning as above, we show that the lower bound in $(H_{r+1})$ holds.

	\subsection{Controlling the greatest eigenvalue}
	
	Set $\mu_{1,r}=\lb_{1,r}-1\ge 0$. 
	
	Since $u(1,r)\le u(1,R)\le u(0,R)$, we can write
	\bean
	\mu_{1,r}  & \le & \delta_R \sqrt{\mu_{1,r} +1}.
	\eean
	Hence, using that $x\le A\sqrt{1+x}$ implies $x\le 2 A$, we reach the upper estimate for $\lb_{1,r}$.
	
	This concludes the proof of Theorem \ref{main}.

	\section{Two simple examples and an open question}

	Let us choose $u(k,r)=\frac{2r-k}{\sqrt{r}}$.
	Using $(r+1)(2r-k)^2\le r(2r+1-k)^2$ and $(r+1)(r+k)^2\le r(r+1+k)^2$, we thus deduce 
	that $u$ verifies Hypothesis (H). Applying Theorem \ref{main}, we obtain that we can extract a submatrix with $R$ columns and  $\lb_{1,R}  \le  1+ \e$,
	provided that 
	\bean
	R\log R & \le & \frac{\epsilon^2 }{8} \frac{p}{\Vert X \Vert^2},
	\eean
	which is a slightly weaker bound than the one known from \cite{BourgainTzafriri:IJM87}. 
	
	One can also verify that $u(k,r)= \sqrt{r-k}$ satisfies Hypothesis (H) and yields a similar bound.
	
	An open question is then to know whether there exists a function $u$ satisfying Hypothesis (H) and allowing to reach the optimal bound known in the Bourgain Tzafriri theorem \cite{BourgainTzafriri:IJM87} via our new algorithm.
	
	\bibliographystyle{amsplain}
	\bibliography{database}

\end{document}